\newtheorem{theorem}{Theorem}[section]
\newtheorem{lemma}[theorem]{Lemma}
\newtheorem{corollary}[theorem]{Corollary}
\theoremstyle{definition}
\theoremstyle{definition}
\theoremstyle{remark}
\newtheorem{remark}[theorem]{Remark}
\numberwithin{equation}{section}
\def\XXint#1#2#3{{
\setbox0=\hbox{$#1{#2#3}{\int}$}
\vcenter{\hbox{$#2#3$}}\kern-.5\wd0}}
\begin{document}
\title{A new and sharper bound for Legendre expansion of differentiable functions} 
\author{Haiyong Wang\footnotemark[2]~\footnotemark[3]
}

\maketitle
\renewcommand{\thefootnote}{\fnsymbol{footnote}}

\footnotetext[2]{School of Mathematics and Statistics, Huazhong
University of Science and Technology, Wuhan 430074, P. R. China.
E-mail: \texttt{haiyongwang@hust.edu.cn}}

\footnotetext[3]{Hubei Key Laboratory of Engineering Modeling and
Scientific Computing, Huazhong University of Science and Technology,
Wuhan 430074, China.}

\begin{abstract}
In this paper, we provide a new and sharper bound for the Legendre
coefficients of differentiable functions and then derive a new error
bound of the truncated Legendre series in the uniform norm. The key
idea of proof relies on integration by parts and a sharp
Bernstein-type inequality for the Legendre polynomial. An
illustrative example is provided to demonstrate the sharpness of our
new results.
\end{abstract}

{\bf Keywords:} Legendre coefficient, differentiable functions,
sharp bound.

\vspace{0.05in}

{\bf AMS classifications:} 41A25, 41A10

\section{Introduction}\label{sec:introduction}
Let $P_n(x)$ be the Legendre polynomial of degree $n$ which is
defined by
\begin{align}
P_n(x) = \frac{1}{2^n n!} \frac{d^n}{dx^n} \left[ (x^2-1)^n \right],
\quad n\geq0.
\end{align}
The set of Legendre polynomials $\{P_0(x),P_1(x),\cdots\}$ form a
system of polynomials orthogonal on the interval $[-1,1]$ with
respect to the weight function $\omega(x)=1$ and
\begin{align}\label{def:LegendrePoly}
\int_{-1}^{1} P_n(x) P_m(x)dx = h_n \delta_{mn},
\end{align}
where $\delta_{mn}$ is the Kronecker delta and
\begin{align}\label{eq:constant}
h_n = \left(n+\frac{1}{2} \right)^{-1}.
\end{align}
The Legendre polynomials are widely used in many branches of
scientific computing such as interpolation and approximation, the
construction of quadrature formulas and spectral methods for
differential equations.

The Legendre expansion of a
function $f:=[-1,1]\rightarrow \mathbb{R}$ is defined by
\begin{align}\label{eq:LegExp}
f(x) = \sum_{n=0}^{\infty} a_n P_n(x),
\end{align}
where the Legendre coefficients are given by
\begin{align}
a_n = h_n^{-1} \int_{-1}^{1} f(x) P_n(x) dx.
\end{align}
The problem of estimating the magnitude of the Legendre coefficients
$a_n$ is of particular interest both from the theoretical and
numerical point of view. Indeed, it is useful not only in
understanding the rate of convergence of Legendre expansion but
useful also in estimating the degree of the Legendre polynomial
approximation to $f(x)$ within a given accuracy.

When $f(x)$ is analytic in a neighborhood of the interval $[-1,1]$,
we note that the estimate of the Legendre coefficients, or more
generally, the Gegenbauer and Jacobi coefficients, has been studied
in
\cite{wang2012legendre,wang2016gegenbauer,xiang2012error,zhao2013sharp}.
The analysis in those references is either built on the connection
relation between Chebyshev and Legendre polynomials
\cite{wang2012legendre,xiang2012error,zhao2013sharp} or built on the
contour integral expression of the Legendre coefficients
\cite{wang2016gegenbauer}.

In this work, we are interested in the case where $f(x)$ is a
differentiable function. We first establish a new bound for the
Legendre coefficients and the key ingredient here is that the
Legendre polynomial satisfies a sharp Bernstein-type inequality. An
illustrative example is provided to show that our new result is
sharper than the result given in \cite[Theorem
2.1]{wang2012legendre}. Furthermore, we then derive a new error
bound of Legendre expansion in the uniform norm. Our main results
are stated in the next section.

\section{A new and sharper bound for Legendre coefficients of differentiable functions}\label{sec:bound}
In this section we state an explicit and computable bound for the
Legendre coefficients of differentiable functions. This new bound,
as will be shown later, is sharper than the one given in
\cite[Theorem 2.1]{wang2012legendre}. Before proceeding, we first
define the weighted semi-norm
\begin{align}
\| f \| := \int_{-1}^{1} \frac{|f{'}(x)|}{(1-x^2)^{\frac{1}{4}}} dx.
\end{align}
The following Bernstein-type inequality of Legendre polynomials will
be useful.
\begin{lemma}\label{lem:LegPolyBound}
For $x\in[-1,1]$ and $n\geq0$, we have
\begin{align}\label{eq:LegPolyBound}
(1-x^2)^{\frac{1}{4}} |P_n(x)| < \sqrt{\frac{2}{\pi}}
\left(n+\frac{1}{2}\right)^{-\frac{1}{2}}.
\end{align}
Moreover, the above inequality is optimal in the sense that the
factor $(n+\frac{1}{2})^{-\frac{1}{2}}$ can not be improved to
$(n+\frac{1}{2}+\epsilon)^{-\frac{1}{2}}$ for any $\epsilon>0$ and
the constant $\sqrt{2/\pi}$ is best possible.
\end{lemma}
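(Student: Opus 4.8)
The plan is to pass to the trigonometric variable $x=\cos\theta$ and study $u(\theta):=(\sin\theta)^{1/2}P_n(\cos\theta)$ on $(0,\pi)$. Squaring the asserted inequality and using $(1-x^2)^{1/2}=\sin\theta$, the claim is equivalent to $u(\theta)^2<\tfrac{2}{\pi}(n+\tfrac12)^{-1}$. Since $P_n(-x)=(-1)^nP_n(x)$ gives $u(\pi-\theta)=(-1)^n u(\theta)$, it suffices to treat $\theta\in(0,\pi/2]$, the endpoints $\theta=0,\pi$ being trivial because $u$ vanishes there.

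First I would record the differential equation satisfied by $u$. Writing Legendre's equation in the variable $\theta$ and removing the first-order term by the Liouville substitution $P_n(\cos\theta)=(\sin\theta)^{-1/2}u$ yields $u''+g(\theta)u=0$ with $g(\theta)=(n+\tfrac12)^2+\tfrac14\csc^2\theta$. The engine of the proof is the Sonine-type function $S(\theta):=u^2+(u')^2/g$. A direct computation using $u''=-gu$ gives $S'=-(u')^2 g'/g^2$, and since $g$ is decreasing on $(0,\pi/2)$ (indeed $g'=-\tfrac12\csc^2\theta\cot\theta<0$ there), $S$ is nondecreasing on that interval. As $S\ge u^2$ pointwise and $S$ is continuous, this yields $u(\theta)^2\le S(\theta)\le S(\pi/2)$ for all $\theta\in(0,\pi/2]$, reducing everything to a single endpoint value.

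It remains to estimate $S(\pi/2)$, and by the symmetry of $u$ this splits by parity. For even $n$ one has $u'(\pi/2)=0$, so $S(\pi/2)=u(\pi/2)^2=P_n(0)^2$; for odd $n$ one has $u(\pi/2)=0$, so $S(\pi/2)=P_n'(0)^2/\big((n+\tfrac12)^2+\tfrac14\big)$. Both $P_n(0)$ and $P_n'(0)$ have closed forms in terms of the central binomial coefficient $W_m=\binom{2m}{m}4^{-m}=\Gamma(m+\tfrac12)/\big(\sqrt\pi\,\Gamma(m+1)\big)$, so the target bound $S(\pi/2)<\tfrac{2}{\pi}(n+\tfrac12)^{-1}$ becomes a concrete inequality for $W_m$. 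The even case is exactly the classical sharp Wallis-ratio inequality $\Gamma(m+\tfrac12)/\Gamma(m+1)<(m+\tfrac14)^{-1/2}$, while the odd case reduces to the same bound for $m\ge1$ (after an elementary estimate) together with a direct check of the single index $n=1$.

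Finally, sharpness comes from the same endpoint: taking $x=0$ and $n=2m$ makes the left side equal to $W_m$, and the precise asymptotics $W_m^2=\tfrac1\pi(m+\tfrac14)^{-1}\big(1+O(m^{-2})\big)$ show that the ratio of the two sides of the squared inequality tends to $1$, so neither the constant $\sqrt{2/\pi}$ nor the shift $n+\tfrac12$ can be improved. I expect the main obstacle to be precisely this endpoint estimate: the entire sharpness of the statement is concentrated in the Wallis-ratio inequality, so one must invoke its optimal form rather than a crude Stirling bound; one must also be careful about the sign of $g'$ (hence the monotonicity direction of $S$) and about isolating the odd case and the small index $n=1$.
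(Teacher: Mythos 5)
Your proposal cannot be matched against the paper's own argument in the usual sense, because the paper does not actually prove this lemma: its entire proof is the citation ``See \cite{Antonov1981estimate,lorch1983altern}.'' What you have written is therefore a genuinely different (self-contained) treatment, though it follows what is essentially the classical route, close in spirit to the cited literature: the Liouville substitution turns Legendre's equation into $u''+gu=0$ with $g(\theta)=(n+\tfrac12)^2+\tfrac14\csc^2\theta$; the Sonine function $S=u^2+(u')^2/g$ satisfies $S'=-(u')^2g'/g^2\ge 0$ on $(0,\pi/2)$ because $g$ decreases there; parity reduces everything to the single value $S(\pi/2)$; and the central values $P_{2m}(0)=(-1)^mW_m$ and $P_n'(0)=nP_{n-1}(0)$ convert the claim into the sharp Wallis-ratio inequality $\Gamma(m+\tfrac12)/\Gamma(m+1)<(m+\tfrac14)^{-1/2}$. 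Your sharpness argument is also sound: at $x=0$ with $n=2m$ the two sides of the squared inequality differ by a factor $1+O(m^{-2})$, which rules out both a larger shift than $\tfrac12$ and a smaller constant than $\sqrt{2/\pi}$. What the citation buys the paper is brevity; what your argument buys is self-containedness and a transparent identification of where the optimal constants come from.

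One step, however, is not as routine as your phrase ``after an elementary estimate'' suggests, and it is the only place the outline could founder. For odd $n=2m+1$ you need
\begin{align*}
S(\pi/2)=\frac{(2m+1)^2W_m^2}{(2m+\tfrac32)^2+\tfrac14}<\frac{1}{\pi\left(m+\tfrac34\right)},
\end{align*}
and inserting the Wallis bound $W_m^2<\tfrac1\pi\left(m+\tfrac14\right)^{-1}$ directly reduces this, after clearing denominators, to $4m^3+7m^2+4m+\tfrac34\le 4m^3+7m^2+4m+\tfrac58$, which is false for every $m$: the naive chaining loses exactly the hair's breadth by which the lemma is sharp. The repair is the identity $(2m+1)W_m=2(m+1)W_{m+1}$, which lets you apply the same Wallis bound one index up: $(2m+1)^2W_m^2=4(m+1)^2W_{m+1}^2<\tfrac4\pi (m+1)^2\left(m+\tfrac54\right)^{-1}$, and the required inequality then expands to $4m^3+11m^2+10m+3<4m^3+11m^2+10m+\tfrac{25}{8}$, which holds for all $m\ge0$. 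With this replacement the odd case goes through uniformly (no separate check of $n=1$ is needed), and your outline becomes a complete and correct proof.
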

\begin{proof}
See \cite{Antonov1981estimate,lorch1983altern}.
\end{proof}

\begin{figure}[h]
\centering
\begin{overpic}
[width=4.8cm]{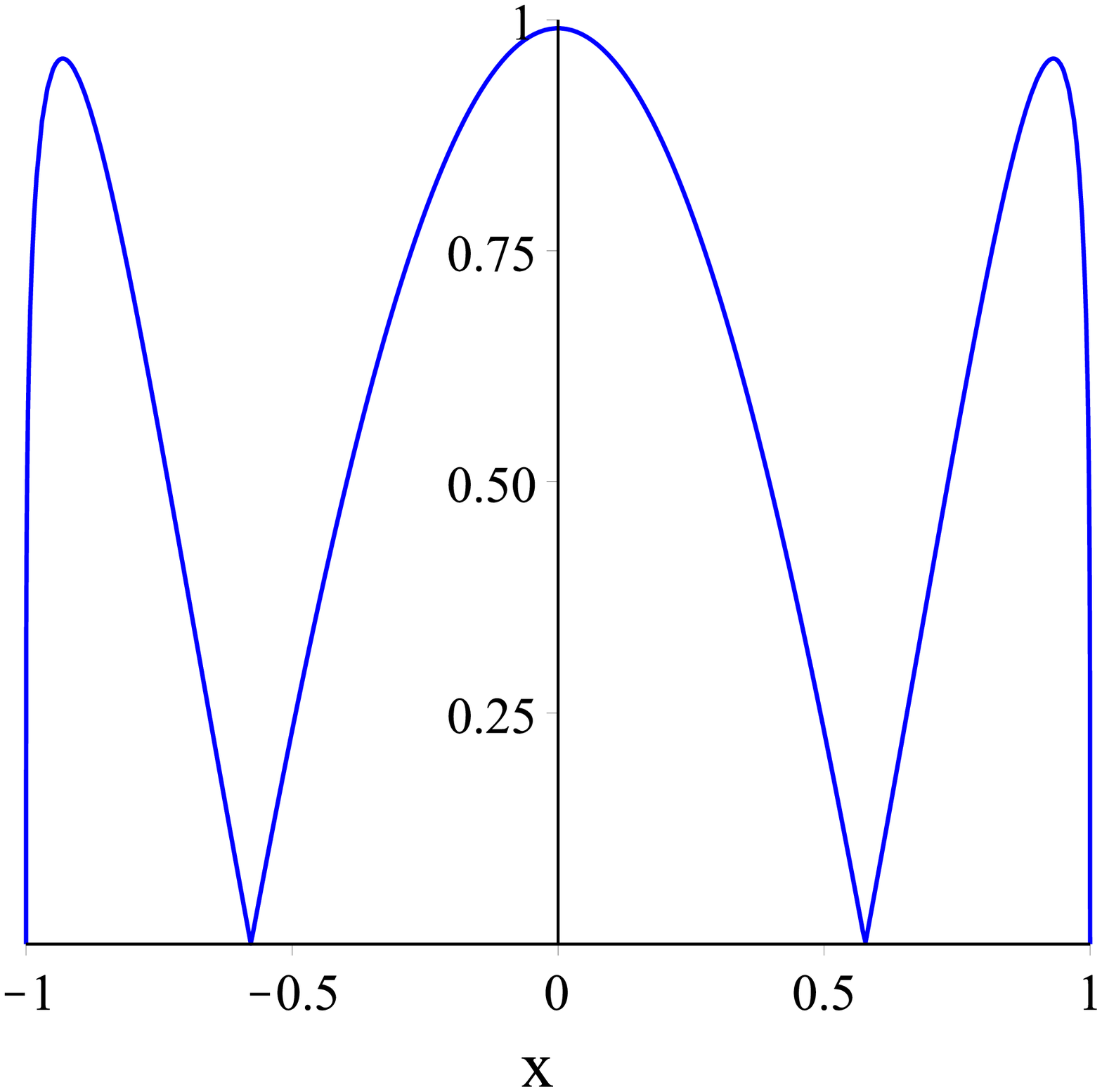}
\end{overpic}
\begin{overpic}
[width=4.8cm]{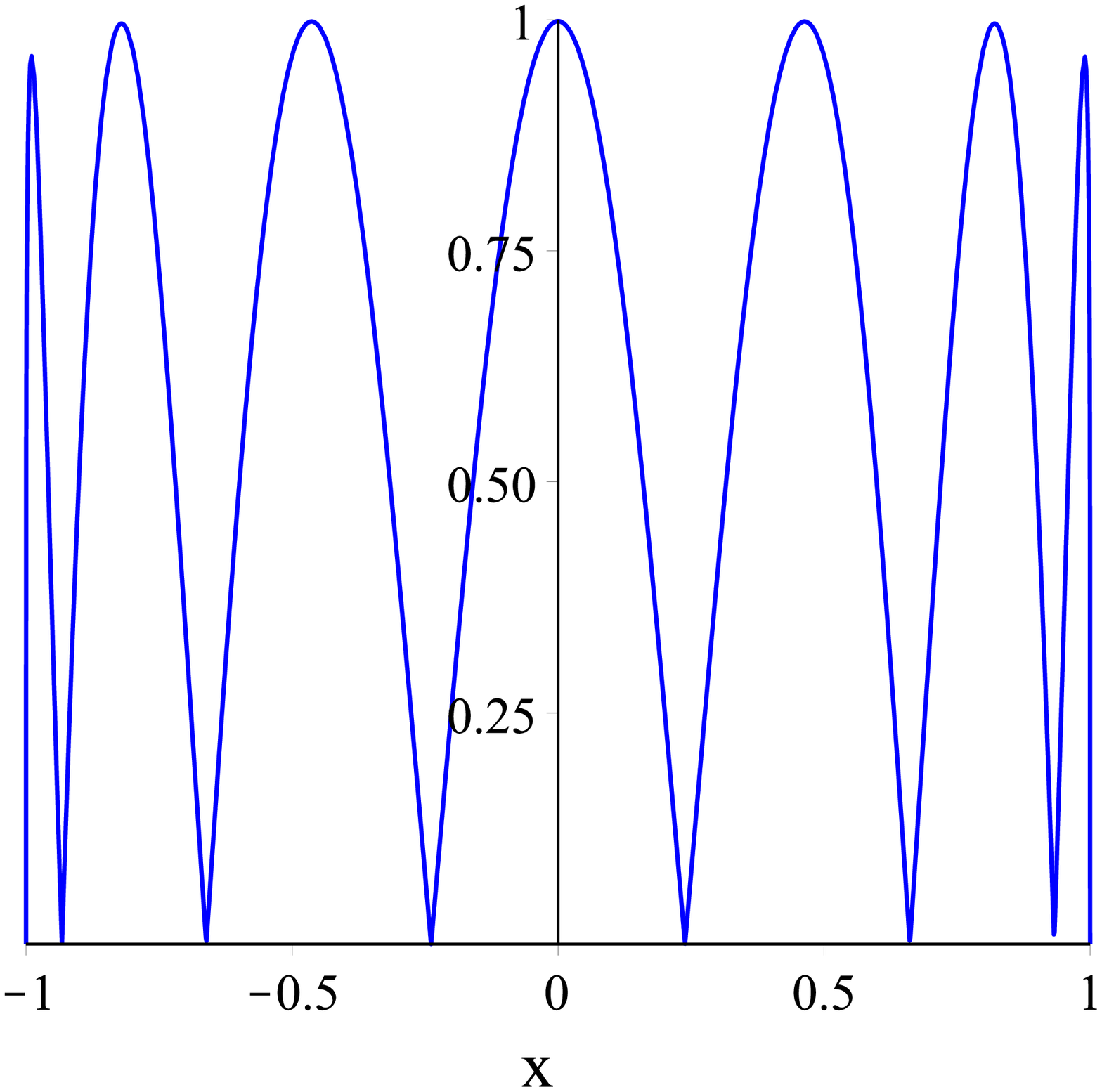}
\end{overpic}
\begin{overpic}
[width=4.8cm]{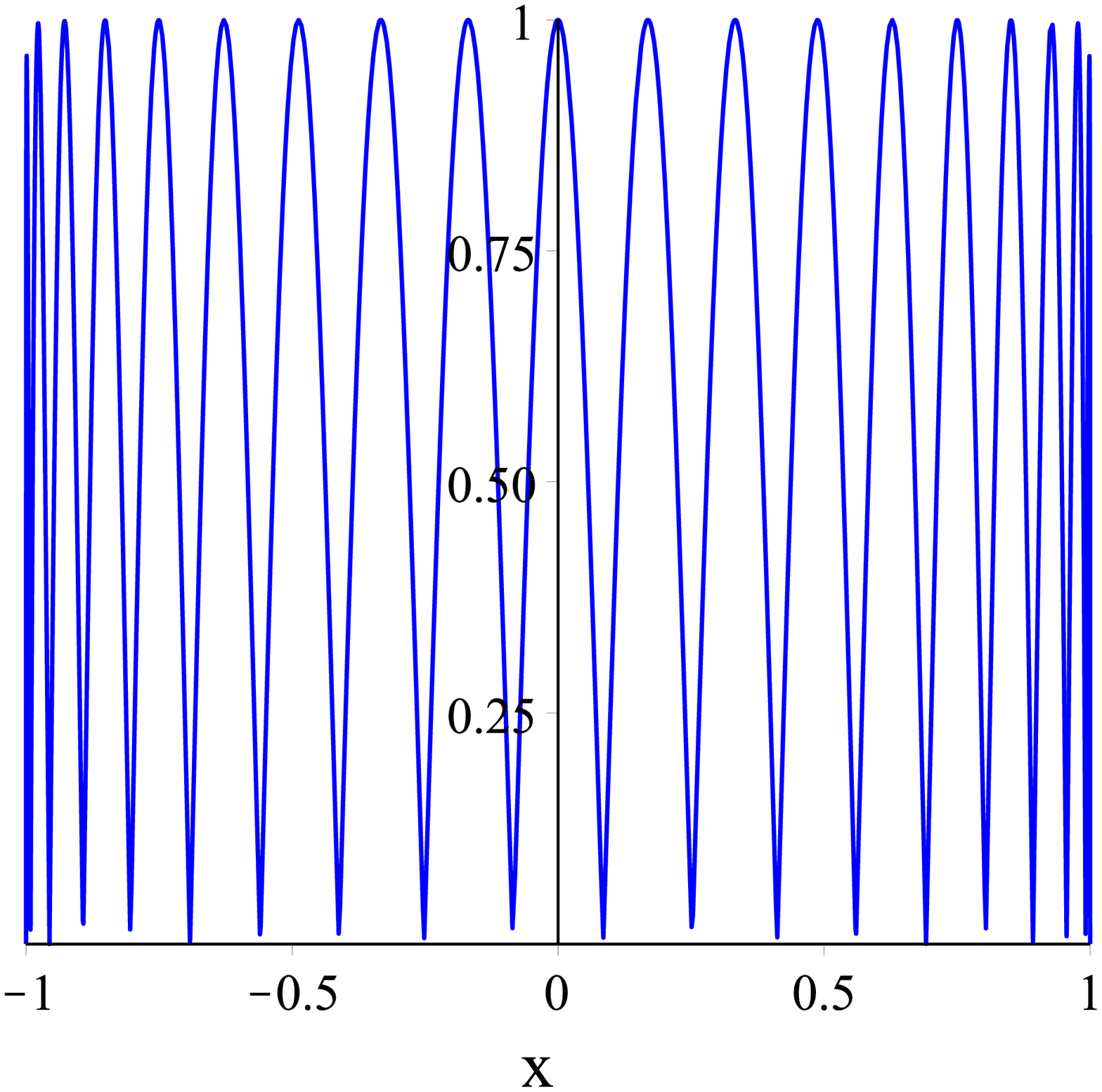}
\end{overpic}
\caption{The ratio of the term on the left-hand side to the term on
the right-and side of \eqref{eq:LegPolyBound} for $n=2$ (left),
$n=6$ (middle) and $n=18$ (right). } \label{fig:LegendreBound}
\end{figure}

To show the sharpness of \eqref{eq:LegPolyBound}, we consider the
ratio of the term on the left-hand side to the term on the
right-hand side as a function of $x$. Numerical results are
presented in Figure \ref{fig:LegendreBound} for three values of $n$.
It is clear to see that the maximum value of the ratio is very close
to one.

We are now ready to state our first main result on the bound of
Legendre coefficients for differentiable functions.
\begin{theorem}\label{thm:NewBound}
Assume that $f,f{'},\ldots,f^{(m-1)}$ are absolutely continuous and
the $m$th derivative $f^{(m)}(x)$ is of bounded variation.
Furthermore, assume that $V_m = \|f^{(m)}\|<\infty$. Then, for
$n\geq m+1$,
\begin{align}
|a_n| \leq \frac{2V_m}{\sqrt{\pi(2n-2m-1)}} \prod_{k=1}^{m} h_{n-k}.
\end{align}
where $h_n$ is defined as in \eqref{eq:constant} and the product is
assumed to be one when $m=0$.
\end{theorem}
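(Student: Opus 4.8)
The plan is to reduce everything to a single integration-by-parts identity and then run an induction on $m$. Writing $a_n(g)=h_n^{-1}\int_{-1}^1 g(x)P_n(x)\,dx$ for the $n$-th Legendre coefficient of a function $g$, I would first record the elementary antiderivative relation $\int_{-1}^x P_n(t)\,dt=(P_{n+1}(x)-P_{n-1}(x))/(2n+1)$, which follows from $(2n+1)P_n=P_{n+1}'-P_{n-1}'$ together with $P_{n+1}(-1)=P_{n-1}(-1)$. For $n\geq 1$ the right-hand side vanishes at both endpoints (since $P_k(\pm1)=(\pm1)^k$), so integrating by parts once and invoking orthogonality to discard the boundary term produces the recursion $a_n(f)=\tfrac12\big(h_{n-1}a_{n-1}(f')-h_{n+1}a_{n+1}(f')\big)$, the factor $\tfrac12$ coming from $h_n^{-1}/(2n+1)$. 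This recursion is the engine of the whole argument.

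I would then prove the theorem by induction on $m$, the base case $m=0$ being the heart of the matter. Here $f$ is merely of bounded variation, so I interpret the integration by parts in the Riemann--Stieltjes sense, $a_n(f)=-h_n^{-1}(2n+1)^{-1}\int_{-1}^1 (P_{n+1}-P_{n-1})\,df$. Bounding $|a_n(f)|$ by pulling the weight $(1-x^2)^{1/4}$ out against the seminorm gives $|a_n(f)|\leq \tfrac12\big(\sup_{x}(1-x^2)^{1/4}|P_{n+1}(x)-P_{n-1}(x)|\big)\,V_0$; applying Lemma~\ref{lem:LegPolyBound} to each of $P_{n+1}$ and $P_{n-1}$, then the triangle inequality and the monotonicity $(n+\tfrac32)^{-1/2}\leq(n-\tfrac12)^{-1/2}$, collapses the supremum to $2\sqrt{2/\pi}\,(n-\tfrac12)^{-1/2}$. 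This yields exactly $|a_n(f)|\leq 2V_0/\sqrt{\pi(2n-1)}$ for $n\geq 1$, the claimed estimate with the empty product.

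For the inductive step I would apply the recursion together with the inductive hypothesis to $g=f'$, noting that $g,\dots,g^{(m-2)}$ are absolutely continuous, $g^{(m-1)}=f^{(m)}$ is of bounded variation, and $\|g^{(m-1)}\|=V_m$. The triangle inequality gives $|a_n(f)|\leq \tfrac12 h_{n-1}|a_{n-1}(f')|+\tfrac12 h_{n+1}|a_{n+1}(f')|$, and the $(m-1)$-bound applied at indices $n-1$ and $n+1$ (legitimate since $n\geq m+1$) produces two terms. A short reindexing shows $h_{n-1}\prod_{k=1}^{m-1}h_{n-1-k}=\prod_{k=1}^{m}h_{n-k}$, so the lower-branch term already equals half the target. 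For the upper-branch term I would use that $h_k$ is strictly decreasing and $1/\sqrt{2n-2m+3}\leq 1/\sqrt{2n-2m-1}$: the product $h_{n+1}\prod_{k=1}^{m-1}h_{n+1-k}$ has the same number of factors as $\prod_{k=1}^{m}h_{n-k}$ but with uniformly larger indices, hence is dominated factorwise. Thus the upper branch is also at most half the target, and adding the two halves closes the induction.

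The step I expect to require the most care is precisely this index bookkeeping in the inductive step: the recursion unavoidably introduces the wrong-direction coefficient $a_{n+1}(f')$, and the whole point of obtaining a clean single product rather than a binomial-type sum is that the decreasing monotonicity of $h_k$ (and of the square-root factor) lets the $n+1$ branch be absorbed into the $n-1$ branch without inflating the constant. A secondary technical point is the rigorous Riemann--Stieltjes treatment at the base case---verifying that the boundary terms genuinely vanish for $n\geq1$ and that the weighted seminorm $V_0$ correctly absorbs the weight $(1-x^2)^{1/4}$---which is where the bounded-variation hypothesis and Lemma~\ref{lem:LegPolyBound} enter. Everything else is routine algebra.
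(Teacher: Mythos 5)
Your proposal is correct, and while it uses exactly the same ingredients as the paper---the relation $(2n+1)P_n = P_{n+1}'-P_{n-1}'$, integration by parts with vanishing boundary terms, Lemma~\ref{lem:LegPolyBound}, and the monotonicity of $h_n$ and of the square-root factor---it organizes them differently, and in a way that is arguably tighter. The paper expands the repeated integrations by parts explicitly: it carries out the full computation for $m=0$ and $m=1$ (producing the four-term expression in \eqref{eq:StepTwo}), bounds every term via the Bernstein-type inequality, and then for $m\geq 2$ simply asserts that ``we may continue the above process,'' leaving the general bookkeeping to the reader. You instead distill a single coefficient recursion
\begin{align}
a_n(f) \;=\; \tfrac12\bigl(h_{n-1}\,a_{n-1}(f') - h_{n+1}\,a_{n+1}(f')\bigr), \nonumber
\end{align}
and run a genuine induction on $m$, absorbing the ``upper branch'' $a_{n+1}(f')$ into the target bound by factorwise domination of the products of the $h_k$'s. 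Your index checks are all correct: $h_{n-1}\prod_{k=1}^{m-1}h_{n-1-k}=\prod_{k=1}^{m}h_{n-k}$, the upper-branch product has every index larger by $2$, and the hypothesis $n\geq m+1$ is exactly what makes the inductive hypothesis applicable at index $n-1$. What your route buys is a complete proof of the $m\geq 2$ case that the paper only sketches; what the paper's route buys is that the mechanics for small $m$ are fully visible (and it makes transparent where the factor-of-two slack in each step comes from). Your Riemann--Stieltjes treatment of the base case is also slightly more careful than the paper's, which writes $\int|f'(x)|(1-x^2)^{-1/4}dx$ without addressing how $|f'(x)|\,dx$ should be read for a function that is merely of bounded variation; interpreting $V_0$ against the total variation measure, as you do, is the reading under which the estimate is rigorous.
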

\begin{proof}
The basic idea of our proof is to employ integration by parts and
the inequality in Lemma \ref{lem:LegPolyBound}. By combining
\cite[Equation 18.9.7]{olver2010nist} and \cite[Eqation
18.9.19]{olver2010nist}, we have
\begin{align}\label{eq:LegRec}
P_n(x) = \frac{P_{n+1}{'}(x) - P_{n-1}{'}(x)}{2h_n^{-1}}, \quad
n\geq1,
\end{align}
Substituting this into \eqref{eq:LegExp} and applying integration by
part once, we obtain
\begin{align}\label{eq:StepOne}
a_n &= h_n^{-1} \int_{-1}^{1} f(x) P_n(x) dx \nonumber \\
&= \int_{-1}^{1} f(x) \frac{P_{n+1}{'}(x) - P_{n-1}{'}(x)}{2} dx
\nonumber \\
&= \left[ f(x) \frac{P_{n+1}(x) - P_{n-1}(x)}{2} \right]_{-1}^{1} +
\int_{-1}^{1} f{'}(x) \frac{P_{n-1}(x) - P_{n+1}(x)}{2} dx.
\end{align}
Furthermore, by making use of $P_n(\pm1) = (\pm1)^n$ for each
$n\geq0$, it is easy to see that the first term in the last equation
vanishes and therefore
\begin{align}\label{eq:LegendreStepOne}
a_n &= \int_{-1}^{1} f{'}(x) \frac{P_{n-1}(x) - P_{n+1}(x)}{2} dx.
\end{align}
This together with the result of Lemma \ref{lem:LegPolyBound} gives
\begin{align}
|a_n| &\leq \int_{-1}^{1} \frac{|f{'}(x)|}{2} \left[
\frac{2(1-x^2)^{-\frac{1}{4}}}{\sqrt{\pi(2n-1)}} +
\frac{2(1-x^2)^{-\frac{1}{4}}}{\sqrt{\pi(2n+3)}} \right] dx
\nonumber \\
&\leq \frac{2}{\sqrt{\pi(2n-1)}} \int_{-1}^{1}
\frac{|f{'}(x)|}{(1-x^2)^{\frac{1}{4}}} dx \nonumber \\
&=
\frac{2V_0}{\sqrt{\pi(2n-1)}}. \nonumber
\end{align}
This proves the case $m=0$.

When $m=1$, integrating by part to \eqref{eq:LegendreStepOne} again,
we get
\begin{align}\label{eq:StepTwo}
a_n &= \int_{-1}^{1} \frac{f{'}(x)}{2} \left[ \frac{P_{n}{'}(x) -
P_{n-2}{'}(x)}{2h_{n-1}^{-1}} - \frac{P_{n+2}{'}(x) -
P_{n}{'}(x)}{2h_{n+1}^{-1}} \right] dx \nonumber \\
&= \left[ f{'}(x) \frac{P_n(x) - P_{n-2}(x)}{4h_{n-1}^{-1}}
\right]_{-1}^{1} - \left[ f{'}(x) \frac{P_{n+2}(x) -
P_{n}(x)}{4h_{n+1}^{-1}} \right]_{-1}^{1} \nonumber \\
&~~~~~~~~ + \int_{-1}^{1} f{''}(x) \left[
\frac{P_{n-2}(x)}{4h_{n-1}^{-1}} - \frac{P_{n}(x)}{4h_{n-1}^{-1}} -
\frac{P_{n}(x)}{4h_{n+1}^{-1}} + \frac{P_{n+2}(x)}{4h_{n+1}^{-1}}
\right]dx.
\end{align}
We see that the first two terms in the last equation vanish and
therefore
\begin{align}
a_n &= \int_{-1}^{1} f{''}(x) \left[
\frac{P_{n-2}(x)}{4h_{n-1}^{-1}} - \frac{P_{n}(x)}{4h_{n-1}^{-1}} -
\frac{P_{n}(x)}{4h_{n+1}^{-1}} + \frac{P_{n+2}(x)}{4h_{n+1}^{-1}}
\right]dx.
\end{align}
By using the inequality in Lemma \ref{lem:LegPolyBound} again, we
obtain
\begin{align}
|a_n| &\leq \int_{-1}^{1} \frac{|f{''}(x)|}{(1-x^2)^{\frac{1}{4}}}
\left[ \frac{h_{n-1}}{2 \sqrt{\pi(2n-3)}} +
\frac{h_{n-1}}{2\sqrt{\pi(2n+1)}} \right. \nonumber \\
&~~~~~~~~ \left. + \frac{h_{n+1}}{2\sqrt{\pi(2n+1)}} +
\frac{h_{n+1}}{2\sqrt{\pi(2n+5)}} \right]dx \nonumber \\
&\leq \frac{2h_{n-1}}{\sqrt{\pi(2n-3)}} \int_{-1}^{1}
\frac{|f{''}(x)|}{(1-x^2)^{\frac{1}{4}}} dx \nonumber \\
&= \frac{2V_1}{\sqrt{\pi(2n-3)}} h_{n-1},
\end{align}
where we have used the property that $h_n$ is strictly decreasing
with respect to $n$ in the second step and this proves the case
$m=1$.

When $m\geq2$, we may continue the above process and this brings in
higher derivatives of $f$ and corresponding higher variations up to
$V_m$. Hence we can obtain the desired result.
\end{proof}

How sharp is Theorem \ref{thm:NewBound}? We consider the following
example
\begin{align}\label{eq:testfunction}
f(x) = |x-t|,
\end{align}
where $t\in(-1,1)$. It is easy to see that this function has a jump
in the first order derivative at $x=t$. 
In this case, it is readily verified that $m=1$ and $V_m =
2(1-t^2)^{-\frac{1}{4}}$ and therefore the result of Theorem
\ref{thm:NewBound} can be written explicitly as
\begin{align}\label{eq:ExamBound1}
|a_n| \leq \frac{2V_1}{\sqrt{\pi(2n-3)}}
\left(n-\frac{1}{2}\right)^{-1} = \frac{4
(1-t^2)^{-\frac{1}{4}}}{\sqrt{\pi(2n-3)}}
\left(n-\frac{1}{2}\right)^{-1}.
\end{align}
Let $B_1(n)$ denote the bound on the right-hand side of
\eqref{eq:ExamBound1}. We compare $B_1(n)$ with the absolute values
of the Legendre coefficients $|a_n|$ and numerical results are
illustrated in Figure \ref{fig:Example} for two values of $t$. We
can see clearly that, in the case of $t=0$, i.e., $f(x) = |x|$, our
bound $B_1(n)$ is almost indistinguishable with $|a_n|$ as $n$
increases. In fact, when $n=400$, we have $B_1(n)\approx
0.0002000963242$ and $|a_{n}|\approx 0.0001991004306$. It is clear
to see that our bound is rather sharp.

In \cite[Theorem 2.1]{wang2012legendre}, an upper bound of the
Legendre coefficients was given by
\begin{align}\label{eq:Bound12}
|a_n| \leq
\frac{\widehat{V}_m}{(n-\frac{1}{2})(n-\frac{3}{2})\cdots(n-m+\frac{1}{2})}
\sqrt{\frac{\pi}{2(n-m-1)}},
\end{align}
where $n\geq m+2$ and
\[
\widehat{V}_m = \int_{-1}^{1}
\frac{|f^{(m+1)}(x)|}{(1-x^2)^{\frac{1}{2}}}dx.
\]
We now make a comparison between the result of Theorem
\ref{thm:NewBound} with the bound on the right-hand side of
\eqref{eq:Bound12}. For simplicity, we let $B_{2}(n)$ denote the
bound on the right-hand side of \eqref{eq:Bound12}. For the function
\eqref{eq:testfunction}, we have $\widehat{V}_1 =
2(1-t^2)^{-\frac{1}{2}}$ and thus
\begin{align}\label{eq:ExamBound2}
B_2(n) = \frac{\widehat{V}_1}{n-\frac{1}{2}}
\sqrt{\frac{\pi}{2(n-2)}} =
\frac{2(1-t^2)^{-\frac{1}{2}}}{n-\frac{1}{2}}
\sqrt{\frac{\pi}{2(n-2)}}.
\end{align}
Comparing $B_1(n)$ and $B_2(n)$, we have for $n\geq3$ that
\begin{align}
\frac{B_1(n)}{B_2(n)} = \frac{2(1-t^2)^{\frac{1}{4}}}{\pi}
\sqrt{\frac{2n-4}{2n-3}} < \frac{2(1-t^2)^{\frac{1}{4}}}{\pi}.
\nonumber
\end{align}
Clearly, we see that the new bound is always sharper; see Figure
\ref{fig:Example}.

\begin{figure}[h]
\centering
\begin{overpic}
[width=7.2cm]{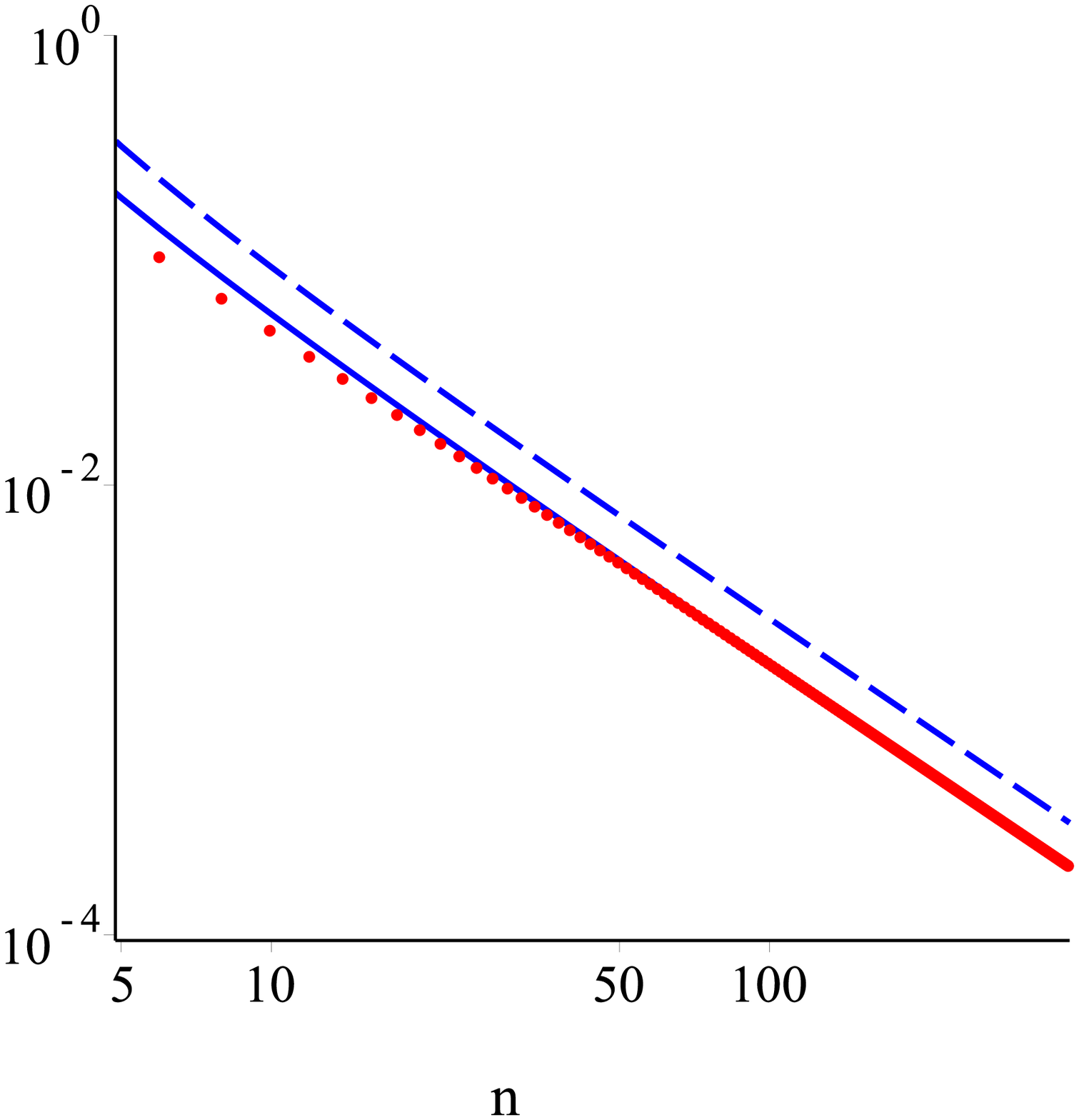}
\end{overpic}
\begin{overpic}
[width=7.2cm]{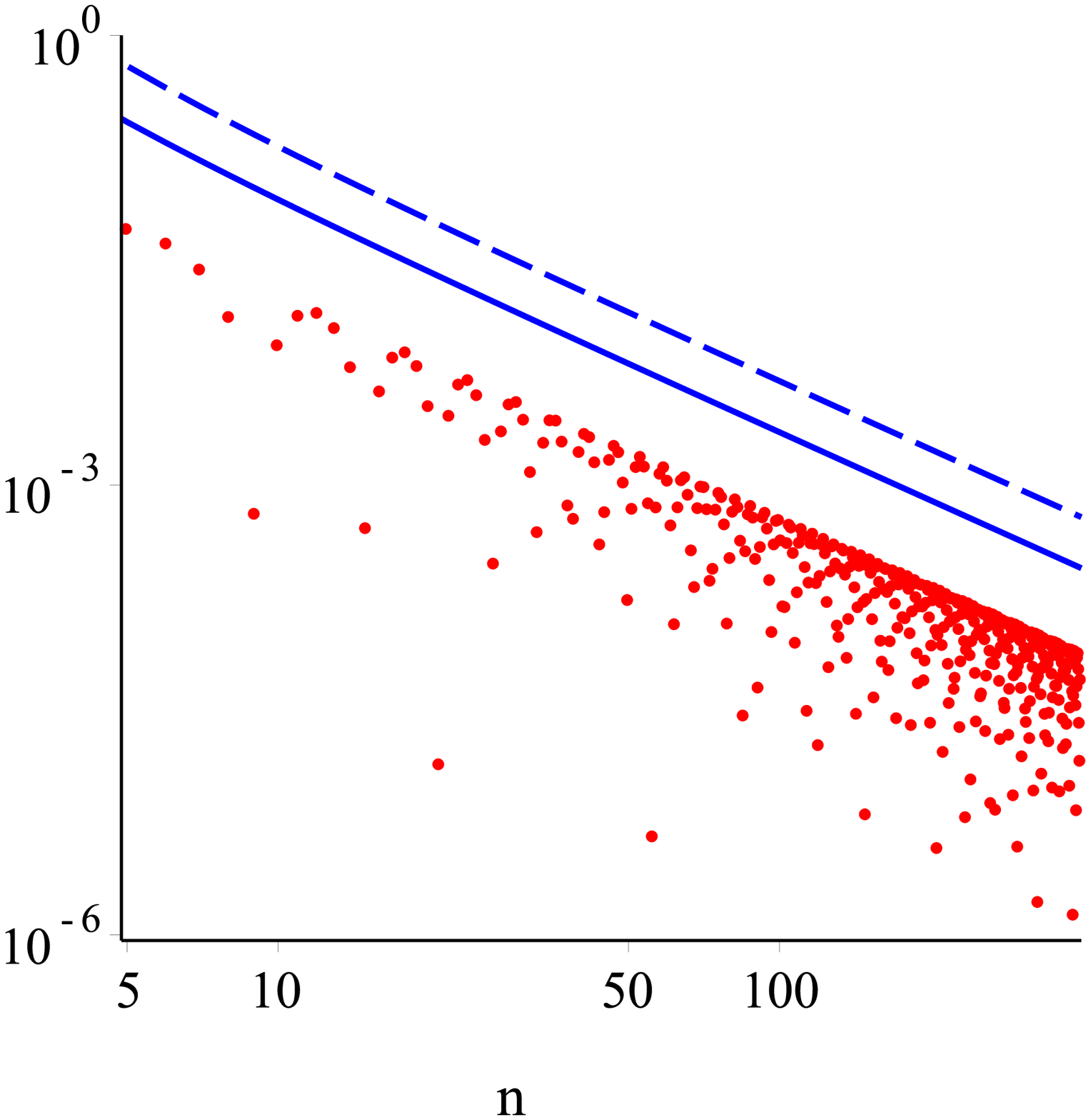}
\end{overpic}
\caption{The bound $B_1(n)$ (line), the bound $B_2(n)$ (dash) and
$|a_n|$ (dots) for $t=0$ (left) and $t=\frac{6}{7}$ (right). Here
$n$ ranges from 5 to 400. } \label{fig:Example}
\end{figure}

We now consider the Legendre polynomial approximation by truncating
the first $N$ terms of \eqref{eq:LegExp}, i.e.,
\begin{align}\label{eq:FiniteLegendre}
f_N(x) = \sum_{n=0}^{N-1} a_n P_n(x).
\end{align}
The following theorem is a corollary of Theorem \ref{thm:NewBound}.
\begin{theorem}\label{thm:LegendreBound}
Under the assumptions of Theorem \ref{thm:NewBound} and assume that
$m\geq1$.
\begin{itemize}
\item When $m=1$, then for each $N\geq 3$,
\begin{align}
\left\|f(x) - f_N(x) \right\|_{\infty} \leq
\frac{4V_1}{\sqrt{\pi(2N-5)}}.
\end{align}
\item When $m\geq2$, then for each $N\geq m+1$,
\begin{align}
\left\|f(x) - f_N(x) \right\|_{\infty} \leq
\frac{2V_m}{(m-1)\sqrt{\pi(2N-2m-1)}} \prod_{k=2}^{m} h_{N-k}.
\end{align}
\end{itemize}
\end{theorem}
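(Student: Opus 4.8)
The plan is to estimate the truncation error through its Legendre tail and then apply Theorem \ref{thm:NewBound} termwise. Since $\max_{x\in[-1,1]}|P_n(x)|=1$ (attained at the endpoints, where $P_n(\pm1)=(\pm1)^n$), I would first write
\begin{align*}
\left\|f-f_N\right\|_{\infty} = \left\| \sum_{n=N}^{\infty} a_n P_n \right\|_{\infty} \leq \sum_{n=N}^{\infty} |a_n|,
\end{align*}
the series being absolutely convergent because the bounds of Theorem \ref{thm:NewBound} are summable. It then remains to bound $\sum_{n=N}^{\infty}|a_n|$ by inserting $|a_n|\leq \frac{2V_m}{\sqrt{\pi(2n-2m-1)}}\prod_{k=1}^{m} h_{n-k}$, valid for every $n\geq N\geq m+1$. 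The whole problem thus reduces to estimating a scalar tail sum, and the two regimes $m=1$ and $m\geq2$ call for two different telescoping devices, which is precisely why the statement is split.

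For the case $m=1$ the product is empty, so after using $h_{n-1}=(n-\frac{1}{2})^{-1}=2/(2n-1)$ the tail becomes a multiple of $\sum_{n=N}^{\infty}\frac{1}{(2n-1)\sqrt{2n-3}}$. I would control this by the telescoping inequality
\begin{align*}
\frac{1}{(2n-1)\sqrt{2n-3}} \leq \frac{1}{\sqrt{2n-5}} - \frac{1}{\sqrt{2n-3}}, \qquad n\geq3,
\end{align*}
which after clearing denominators reduces to $(2n-3)(2n-5)\leq(2n+3)^2$, an inequality that holds for all $n\geq1$. Summing the right-hand side telescopes to $1/\sqrt{2N-5}$, and collecting the constant $4V_1/\sqrt{\pi}$ yields exactly the claimed bound $4V_1/\sqrt{\pi(2N-5)}$.

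For $m\geq2$ the decisive observation is an exact telescoping identity for the product of the $h$'s. Writing $Q_n:=\prod_{k=2}^{m} h_{n-k}$, a short computation using $h_{n-m}-h_{n-1}=\frac{m-1}{(n-m+\frac{1}{2})(n-\frac{1}{2})}$ gives
\begin{align*}
\prod_{k=1}^{m} h_{n-k} = \frac{Q_n - Q_{n+1}}{m-1}.
\end{align*}
Since $1/\sqrt{2n-2m-1}$ is decreasing in $n$ and $Q_n-Q_{n+1}\geq0$, I would then bound
\begin{align*}
\sum_{n=N}^{\infty} \frac{1}{\sqrt{2n-2m-1}} \prod_{k=1}^{m} h_{n-k}
= \frac{1}{m-1}\sum_{n=N}^{\infty} \frac{Q_n-Q_{n+1}}{\sqrt{2n-2m-1}}
\leq \frac{1}{(m-1)\sqrt{2N-2m-1}}\sum_{n=N}^{\infty}(Q_n-Q_{n+1}),
\end{align*}
and the remaining sum telescopes to $Q_N=\prod_{k=2}^{m} h_{N-k}$ because $Q_n\to0$. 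Multiplying through by $2V_m/\sqrt{\pi}$ reproduces the stated bound, with the factor $1/(m-1)$ and the shifted product $\prod_{k=2}^{m} h_{N-k}$ emerging automatically from the identity.

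The only genuinely nonroutine step is discovering the product identity for $\prod_{k=1}^{m} h_{n-k}$; once it is in hand, both the monotonicity estimate on $1/\sqrt{2n-2m-1}$ and the telescoping are immediate. A minor point to verify is the index bookkeeping: the square-root arguments $2n-2m-1$ and $2n-5$ stay positive on the summation range ($n\geq N\geq m+1$ and $n\geq N\geq3$ respectively), and $Q_n$ indeed decreases to $0$, which legitimizes pulling the decreasing factor out and telescoping up to $n=\infty$.
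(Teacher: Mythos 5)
Your proof is correct, and for the case $m\geq2$ it coincides with the paper's own argument: your identity $\prod_{k=1}^{m}h_{n-k}=\frac{1}{m-1}\bigl[Q_n-Q_{n+1}\bigr]$ with $Q_n=\prod_{k=2}^{m}h_{n-k}$ is exactly the paper's telescoping step (note $Q_{n+1}=\prod_{k=1}^{m-1}h_{n-k}$), and whether one pulls out the decreasing factor $1/\sqrt{2n-2m-1}$ before or after invoking that identity is immaterial, since both the products and the differences $Q_n-Q_{n+1}$ are nonnegative. Where you genuinely diverge is the case $m=1$: the paper first crudely bounds $\sum_{n=N}^{\infty}\frac{1}{(n-\frac{1}{2})\sqrt{n-\frac{3}{2}}}$ by $\sum_{n=N}^{\infty}(n-\frac{3}{2})^{-3/2}$ and then compares with the integral $\int_{N-1}^{\infty}(x-\frac{3}{2})^{-3/2}\,dx=2/\sqrt{N-\frac{5}{2}}$, whereas you use the discrete telescoping inequality $\frac{1}{(2n-1)\sqrt{2n-3}}\leq\frac{1}{\sqrt{2n-5}}-\frac{1}{\sqrt{2n-3}}$, which, as you say, reduces after clearing denominators to $(2n-5)(2n-3)\leq(2n+3)^2$ and so holds on the relevant range $n\geq3$ guaranteed by $N\geq3$. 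Both devices yield exactly the same constant $4V_1/\sqrt{\pi(2N-5)}$, so nothing is lost or gained in sharpness. Your version has the aesthetic advantage of treating the whole theorem with a single, purely algebraic telescoping philosophy (the $m=1$ case becomes the discrete analogue of the $m\geq2$ case), while the paper's integral-comparison step is the more standard tool and adapts mechanically to other exponents without having to guess a telescoping majorant.
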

\begin{proof}
Recall the well-known inequality $|P_n(x)| \leq 1$ for $x\in[-1,1]$,
we have
\begin{align}\label{eq:MaxBound}
\| f(x) - f_N(x) \|_{\infty} &\leq \sum_{n=N}^{\infty} |a_n|.
\end{align}
We first consider the case $m=1$. By using Theorem
\ref{thm:NewBound}, we obtain
\begin{align}\label{eq:FirstCase}
\| f(x) - f_N(x) \|_{\infty} &\leq \sum_{n=N}^{\infty} \frac{2V_m
h_{n-1}}{\sqrt{\pi(2n-3)}} = \frac{2V_m}{\sqrt{2\pi}}
\sum_{n=N}^{\infty} \frac{1}{(n-\frac{1}{2}) \sqrt{n-\frac{3}{2}}}.
\end{align}
Note that
\begin{align}
\sum_{n=N}^{\infty} \frac{1}{(n-\frac{1}{2}) \sqrt{n-\frac{3}{2}}}
&\leq \sum_{n=N}^{\infty}
\frac{1}{(n-\frac{3}{2})^{\frac{3}{2}}} \nonumber \\
&\leq \int_{N-1}^{\infty} \left(x -
\frac{3}{2}\right)^{-\frac{3}{2}} dx \nonumber \\
&= \frac{2}{\sqrt{N-\frac{5}{2}}}. \nonumber
\end{align}
Substituting this into \eqref{eq:FirstCase} gives the desired
result.

Next, we consider the case $m\geq2$. Combining \eqref{eq:MaxBound}
and Theorem \ref{thm:NewBound}, we obtain
\begin{align}\label{eq:SecondCase}
\| f(x) - f_N(x) \|_{\infty} &\leq \sum_{n=N}^{\infty}
\frac{2V_m}{\sqrt{\pi(2n-2m-1)}} \prod_{k=1}^{m} h_{n-k} \nonumber
\\
&\leq \frac{2V_m}{\sqrt{\pi(2N-2m-1)}}
\sum_{n=N}^{\infty}\prod_{k=1}^{m} h_{n-k}.
\end{align}
Observe that
\begin{align}
\prod_{k=1}^{m} h_{n-k} &= \frac{1}{m-1} \left[ \prod_{k=2}^{m}
h_{n-k} - \prod_{k=1}^{m-1} h_{n-k} \right], \nonumber
\end{align}
which implies
\begin{align}\label{eq:ProdSum}
\sum_{n=N}^{\infty} \prod_{k=1}^{m} h_{n-k} &= \frac{1}{m-1}
\sum_{n=N}^{\infty} \left[ \prod_{k=2}^{m} h_{n-k} - \prod_{k=1}^{m-1} h_{n-k} \right] \nonumber \\
&= \frac{1}{m-1} \prod_{k=2}^{m} h_{N-k}.
\end{align}
Substituting \eqref{eq:ProdSum} into \eqref{eq:SecondCase} gives the
desired result. This completes the proof.
\end{proof}

\begin{remark}
Note that the assumption in \cite[Theorem 2.5]{wang2012legendre}
requires $m>1$. Here we have proved a result for the case $m=1$.
\end{remark}

An interesting question is: What is the error bound of Legendre
approximation to the function $f(x)=|x|$? Note that the analysis of
Chebyshev polynomial approximations to this function has been
discussed comprehensively in \cite[Chapter 7]{trefethen2013atap}.
Here we provide a corresponding result for the truncated Legendre
series.

\begin{corollary}\label{col:AbsFun}
Let $f(x)=|x|$ and let $f_N(x)$ be the truncated Legendre series of
$f(x)$. Then, for each $N\geq3$,
\begin{align}\label{eq:AbsFun}
\|f(x) - f_N(x) \|_{\infty} \leq \frac{8}{\sqrt{\pi(2N-5)}}.
\end{align}
\end{corollary}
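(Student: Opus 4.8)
The plan is to recognize that Corollary~\ref{col:AbsFun} is the special case $m=1$, $t=0$ of Theorem~\ref{thm:LegendreBound}, so the entire task reduces to computing the single quantity $V_1$ for the function $f(x)=|x|$ and substituting it into the bound already established. First I would record that $f(x)=|x|$ has the weak second derivative $f''(x)=2\delta(x)$, so that in the language of bounded variation the first derivative $f'(x)=\operatorname{sgn}(x)$ jumps by $2$ at the single point $x=0$; hence the relevant regularity index is $m=1$, matching the hypothesis $m\geq 1$ of the theorem. This is exactly the instance already analyzed in the worked example \eqref{eq:testfunction} with parameter $t=0$.

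Next I would compute $V_1=\|f''\|$ using the weighted semi-norm
\begin{align}
V_1 = \int_{-1}^{1} \frac{|f''(x)|}{(1-x^2)^{\frac{1}{4}}}\,dx.
\end{align}
Interpreting the jump of $f'$ at $x=t=0$ as a unit point mass of total variation, the same calculation that produced $V_1 = 2(1-t^2)^{-\frac{1}{4}}$ in the discussion following \eqref{eq:testfunction} gives, at $t=0$, simply $V_1 = 2$. Concretely, the variation of $f'$ is concentrated entirely at the origin where the weight factor $(1-x^2)^{-\frac{1}{4}}$ equals $1$, so the weighted total variation is exactly the size of the jump, namely $2$.

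Finally I would invoke the $m=1$ branch of Theorem~\ref{thm:LegendreBound}, which asserts that for each $N\geq 3$,
\begin{align}
\left\| f(x) - f_N(x) \right\|_{\infty} \leq \frac{4V_1}{\sqrt{\pi(2N-5)}}.
\end{align}
Substituting $V_1=2$ immediately yields
\begin{align}
\left\| f(x) - f_N(x) \right\|_{\infty} \leq \frac{8}{\sqrt{\pi(2N-5)}},
\end{align}
which is precisely \eqref{eq:AbsFun}. There is essentially no obstacle in this argument beyond the bookkeeping of the distributional derivative; the only point requiring a moment of care is justifying that $|x|$ genuinely satisfies the hypotheses of Theorem~\ref{thm:NewBound} (that $f$ is absolutely continuous and $f'$ is of bounded variation with the weighted variation finite), and that the point mass at $x=0$ contributes exactly $2$ to $V_1$ rather than being diluted by the weight, which holds because the singular support sits at the center of the interval where the weight is unity.
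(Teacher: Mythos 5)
Your proposal is correct and follows exactly the paper's route: identify $m=1$, compute $V_1=2$ (the jump of $f'$ at $x=0$ weighted by $(1-x^2)^{-1/4}=1$ there), and plug into the $m=1$ case of Theorem~\ref{thm:LegendreBound}. The paper's own proof is just this argument stated tersely, so your extra care about the distributional bookkeeping is a harmless elaboration of the same idea.
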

\begin{proof}
Note that $m=1$ and $V_1 = 2$ for this function. The bound follows
immediately from Theorem \ref{thm:LegendreBound}.
\end{proof}

\begin{remark}
The result in Corollary \ref{col:AbsFun} is actually overestimated.
In fact, numerical experiments show that $\|f(x) - f_N(x)
\|_{\infty} = O(N^{-1})$ as $N\rightarrow\infty$. However, a
rigorous proof is still open.
\end{remark}


\section{Conclusions}
In this paper, we have presented a new and sharper bound for the
Legendre coefficients of differentiable functions. An illustrative
example is provided to demonstrate the sharpness of our results. We
further apply this result to obtain a new error bound of the
truncated Legendre series in the uniform norm.

Finally, we remark that it is possible to extend the result of
Theorem \ref{thm:NewBound} to a more general case. Indeed, from
\cite[Equation 18.14.7]{olver2010nist} we see that the Gegenbauer
polynomial also satisfies a Bernstein-type inequality, e.g.,
\[
(1-x^2)^{\frac{\lambda}{2}} |C_n^{\lambda}(x)| <
\frac{2^{1-\lambda}}{\Gamma(\lambda)} (n+\lambda)^{\lambda-1}, \quad
n\geq0,
\]
where $x\in[-1,1]$ and $0<\lambda<1$ and $C_n^{\lambda}(x)$ denotes
the Gegenbauer polynomial of degree $n$. Therefore, one can expect
that a sharp bound for the Gegenbauer coefficients of differentiable
functions can be obtained in a similar way.

\section*{Acknowledgement}
This work was supported by the National Natural Science Foundation
of China under grant 11671160.


\end{document}